\newcommand{\N}{\mathbb{N}}
\newcommand{\Z}{\mathbb{Z}}
\newcommand{\Q}{\mathbb{Q}}
\newtheorem{definition}{D\'efinition}
\newtheorem{theoreme}{Th\'eor\`eme}
\newtheorem{proposition}{Proposition}
\newtheorem{lemme}{Lemme}
\newtheorem{corollaire}{Corollaire}
\newtheorem{remarque}{Remarque}
\newtheorem*{hyp}{Hypothèse}
\begin{document}

\title{Suites récurrentes linéaires d'ordre $2$ à divisiblité forte}

\author{A. Bauval}\thanks{Anne Bauval, bauval@math.univ-toulouse.fr, IMT, UMR 5580, Universit\'e Toulouse~III}\thanks{\rm 2010 Mathematics Subject Classification: 11B39, 11A05}

%\today

\begin{abstract}

\noindent
\emph{On redémontre, de deux façons plus simples et tout aussi élémentaires, un résultat de \cite{H&S}, qui consistait à déterminer, parmi les suites d'entiers définies par
$$u_1=1,\quad u_2=R,\quad u_{n+2}=Pu_{n+1}-Qu_n$$
avec $P,Q,R\in\Z$, celles qui satisfont la condition de divisibilité forte :
$$\forall i,j\in\N^*\quad u_i\land u_j=\left|u_{i\land j}\right|,$$
où $\land$ désigne le plus grand commun diviseur.}

\begin{flushright}{\small Pour Papa.}\end{flushright}
\end{abstract}

\maketitle

\section{Introduction}

La célèbre suite de Fibonacci $F$, définie par la relation de récurrence linéaire d'ordre $2$
$$F_{n+2}=F_{n+1}+F_n$$
et l'initialisation $F_1=1$, $F_2=1$ (ou $F_0=0$, $F_1=1$) satisfait, entre autres propriétés remarquables, la divisibilité forte présentée en résumé introductif. Elle est le cas particulier $F=U(1,-1)$ de la suite de Lucas $U(P,Q)$ (définition \ref{defi:Lucas}), qui est à divisibilité forte si et seulement si $P\land Q=1$ (proposition~\ref{prop:DivLucas}) --  \cite{Lucas}, \cite{Dickson}.

Dans ce contexte, la valeur de la suite pour l'indice $0$ n'ayant \emph{a priori} pas grand intérêt ($k\land0=|k|$ pour tout entier $k$ ; voir cependant la section \ref{section:Remarques}), il semble naturel d'initialiser les suites à $u_1=1$ (simple convention de normalisation) et de considérer des suites qui dépendent de trois paramètres $P$, $Q$ et $R$ comme dans le résumé introductif.

Le cas $R=P$ correspond aux suites de Lucas. Pour $R\ne P$, on obtient de nouvelles suites. La stratégie de \cite{H&S}, pour trouver toutes celles qui sont à divisibilité forte, est de déterminer des conditions nécessaires sur $(P,Q,R)$ pour que l'équation de divisibilité forte soit vérifiée au moins pour $i,j\le10$. En identifiant les rares candidats qui survivent à ce test sans être des suites de Lucas, ils constatent que ces quelques suites supplémentaires sont à divisibilité forte. Plus précisément, leur théorème peut se reformuler simplement (sans s'embarrasser du fait que certaines des solutions \og exceptionnelles \fg{} sont en même temps des suites de Lucas : cf. remarque \ref{rem:PouQ=0}.b)  :

\begin{theoreme}\label{theo:Principal} Parmi les suites d'entiers définies par une récurrence linéaire d'ordre $2$ et de premier terme $u_1=1$, celles qui sont à divisibilité forte sont les suites de Lucas $U(P,Q)$ avec $P$ et $Q$ premiers entre eux et les suites de l'une des trois formes suivantes, avec $r\in\Z$ et $\varepsilon=\pm1$ :

$(1,r,\varepsilon,\varepsilon r,1,r,\dots)$ ou $(1,\varepsilon,-1,\varepsilon,-1,\dots)$ ou $(1,\varepsilon,-2,\varepsilon,1,-2\varepsilon,1,\varepsilon,\dots)$.
\end{theoreme}

Horák et Skula parviennent à ce résultat à l'issue de calculs pénibles, éliminant progressivement de nombreux candidats qui ne survivent qu'à une partie de leur test. Tout en reprenant leur idée principale, nous proposons des énoncés et démonstrations plus lisibles.

\cite{Schi} a généralisé leur théorème en remplaçant $\Z$ par l'anneau des entiers d'un corps de nombres (pour d'autres corps que $\Q$, il trouve de nouvelles familles de \og solutions exceptionnelles\fg). Nous présenterons une seconde démonstration du théorème de Horák et Skula, inspirée de la méthode de Schinzel mais dans le cas particulier de $\Z$,  ce qui nous dispense de faire appel comme lui à la notion d'entier algébrique, et même à la représentation des solutions d'une récurrence linéaire à l'aide des racines de son équation caractéristique. Nous resterons à un niveau basique d'arithmétique, n'utilisant essentiellement que les trois propriétés suivantes du pgcd :
$$q\land(bq+r)=q\land r ;$$
$$a\mid b\Leftrightarrow a\land b=|a|~;$$
$$\text{si }a\land b=1\text{ alors }a\land(bc)=a\land c.$$
Le plan de l'article est le suivant : la section \ref{section:Faits utiles} énonce quelques propriétés élémentaires de divisibilité des suites récurrentes linéaires d'ordre $2$, la section \ref{section:Lucas} les complète dans le cas particulier des suites de Lucas, la section \ref{section:CN} détermine une condition nécessaire de divisibilité forte, la section \ref{section:Theo1} utilise ce test pour établir le théorème, dont la section \ref{section:Theo2}  donne une seconde démonstration ; la section \ref{section:Remarques} conclut par quelques remarques sur diverses généralisations de propositions intermédiaires, en lien avec la notion de \og divisibilité faible\fg.

\section{Quelques faits utiles}\label{section:Faits utiles}
Le lemme \ref{lem:ValAbs+Period}, immédiat et valable pour une suite quelconque (non nécessairement donnée par une récurrence linéaire), servira à vérifier la divisibilité forte des candidats sélectionnés. Le lemme \ref{lem:Ind34} identifie deux premières conditions indispensables pour que les suites considérées soient à divisibilité forte. Le lemme \ref{lem:DivRec}, qui se démontre  facilement par récurrence, déduit de ces deux conditions quelques propriétés de ces suites.

\begin{lemme}\label{lem:ValAbs+Period}~
\begin{itemize}
\item Une suite d'entiers est à divisibilité forte si (et seulement si) sa valeur absolue l'est.
\item Pour n'importe quels entiers $s>0$ et $t$, la suite qui vaut $t$ pour les indices multiples de $s$ et qui vaut $1$ ailleurs est à divisibilité forte.
\end{itemize}
\end{lemme}

\hypertarget{hyp}{\`A partir de maintenant, on consid\`ere une suite d'entiers $u$ d\'efinie par r\'ecurrence comme dans le r\'esum\'e introductif :}
\begin{hyp}$u_1=1,\quad u_2=R,\quad u_{n+2}=Pu_{n+1}-Qu_n\quad(\forall n\in\N^*)$.
\end{hyp}

\begin{remarque}\label{rem:GeomLucas}{\rm  En g\'en\'eral, on peut retrouver les entiers $P$, $Q$ et $R$ \`a partir de $u_2$, $u_3$ et $u_4$ grâce aux équations $R=u_2$, $Q=PR-u_3$ et $P(u_3-R^2)=u_4-Ru_3$. Une exception survient lorsque $u_3-R^2=0$, c'est-à-dire $Q=R(P-R)$. Dans ce cas, la suite $u$ est géométrique ($u_n=R^{n-1}$) et de Lucas ($u=U(R,0)$).}
\end{remarque}

\begin{lemme}\label{lem:Ind34}
$$u_3\land u_4=1\Leftrightarrow(P\land Q=1\text{ et }R\land Q=1).$$
\end{lemme}

\begin{proof}$u_3=PR-Q\text{ et }u_4=Pu_3-QR$
donc
$$u_3\land u_4=1\Leftrightarrow u_3\land(QR)=1\Leftrightarrow(u_3\land Q=1\text{ et }u_3\land R=1)$$
\hfill\hfill$\Leftrightarrow((PR)\land Q=1\hbox{ et }Q\land R=1)\Leftrightarrow(P\land Q=1\text{ et }R\land Q=1).$
\end{proof}

\begin{remarque}\label{rem:PouQ=0}~
{\rm
\begin{enumerate}[a)]
\item Si $Q=0$, la condition ($R\land Q=P\land Q=1$) est \'equivalente \`a ($P=\pm1$ et $R=\pm1$).
Le cas $R=P$ fait partie des suites de Lucas. Dans le cas $R=-P$, on obtient deux suites : $(1,\varepsilon,-1,\varepsilon,-1,\dots)$ avec $\varepsilon:=R=\pm1$.
\item Si $P=0$, la condition $P\land Q=1$ est \'equivalente \`a $Q=\pm1$. Les suites obtenues sont de la forme $(1,r,\varepsilon,\varepsilon r,1,r,\dots)$, avec $r:=R\in\Z$ et $\varepsilon:=-Q=\pm1$ (la suite est de Lucas si $r=0$ mais aussi, d'apr\`es la remarque~\ref{rem:GeomLucas}, si $r^2=\varepsilon=1$).
\end{enumerate}
Toutes ces suites sont \`a divisibilit\'e forte d'apr\`es le lemme \ref{lem:ValAbs+Period}. Elles r\'eappara\^itront dans le th\'eor\`eme \ref{theo:BigTheo}. Le cas $Q=0$, pris en compte par Hor\'ak et Skula, est bri\`evement \'evacu\'e comme trivial par Schinzel.
}
\end{remarque}

\begin{lemme}\label{lem:DivRec}Si $P\land Q=1$ et $R\land Q=1$ alors (pour tout $n\in\N^*$) $u_n\land Q=1$ et $u_n\land u_{n+1}=1$.
\end{lemme}
(Sous les m\^emes hypoth\`eses, on peut \'egalement d\'emontrer que si $n$ est impair alors $u_n\land P=1$ et $u_n\land u_{n+2}=1$, mais ce ne sera pas utile.)

\section{Suites de Lucas}\label{section:Lucas}
Rappelons la définition des suites de Lucas et, parmi leurs nombreuses propriétés classiques, seulement celles dont nous aurons besoin, en laissant au lecteur la démonstration de quelques lemmes, par la méthode de son choix (récurrence, méthode matricielle, expression des suites à l'aide des racines de l'équation caractéristique...).

\begin{definition}\label{defi:Lucas}La suite de Lucas $U(P,Q)$ -- notée simplement $U$ lorsque les deux entiers $P$ et $Q$ sont fixés -- est définie par :
$$U_0=0,\quad U_1=1\quad\text{et}\quad\forall n\in\N\quad U_{n+2}=PU_{n+1}-QU_n.$$
\end{definition}

Nous exploiterons le cas particulier suivant d'une propriété générale des suites récurrentes linéaires d'ordre $2$ (\cite{Johnson}) :

\begin{lemme}\label{lem:AddU}Pour $U=U(P,Q)$ et pour toute suite $u$ comme dans l'\hyperlink{hyp}{hypothèse},
$$\forall m,n\in\N^*\quad u_{m+n}=U_mu_{n+1}-QU_{m-1}u_n.$$
En particulier, $u_{m+n}\land u_n=(U_mu_{n+1})\land u_n$.
\end{lemme}

\begin{proposition}\label{prop:DivLucas}$U(P,Q)$ est à divisibilité forte si (et seulement si) $P$ et $Q$ sont premiers entre eux.
\end{proposition}

\begin{proof}Si la suite est à divisibilité forte alors $P\land Q=1$, d'après le lemme \ref{lem:Ind34}.\\
Réciproquement, supposons que $P\land Q=1$ et démontrons la divisibilité forte de $U(P,Q)$.
D'après les lemmes \ref{lem:AddU} et \ref{lem:DivRec} :
$$\forall m,n\in\N^*\quad U_{m+n}\land U_n=(U_mU_{n+1})\land U_n=U_m\land U_n.$$
Par \href{https://fr.wikipedia.org/wiki/Anthyph%C3%A9r%C3%A8se}{anthyphérèse}, la divisibilité forte s'ensuit.
\end{proof}

\begin{lemme}\label{lem:uU} $\forall n\in\N^*\quad u_n=U_n+(R-P)U_{n-1}.$
\end{lemme}
%\begin{lemme}\label{lem:PuissancesRacines}Soient $\alpha,\beta$ les deux racines complexes (non nécessairement distinctes) de l'équation caractéristique $X^2-PX+Q=0$. Alors $$\forall n\in\N\quad\alpha^n=U_{n+1}-\beta U_n\quad\text{et}\quad\beta^n=U_{n+1}-\alpha U_n$$(y compris si $\alpha$ ou $\beta$ est nul, avec par convention $0^0=1$).\end{lemme}

\begin{lemme}\label{lem:RacineDouble}Si $P^2-4Q=0$ alors $U_n=n(P/2)^{n-1}$.
\end{lemme}

\section{Condition pour que $u_n\mid u_{2n}$}\label{section:CN}

L'énoncé suivant, inspiré d'un argument de Schinzel mais démontré ici bien plus rapidement, sera la clé de la preuve du théorème principal.

\begin{proposition}\label{prop:DivR-P}Si $R\land Q=P\land Q=1$ alors (pour tout $n\in\N^*$) :
$$u_n\mid u_{2n}\Rightarrow u_n\mid R-P.$$
\end{proposition}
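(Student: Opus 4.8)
The plan is to peel off, one factor at a time, the obstructions between $u_n\mid u_{2n}$ and $u_n\mid R-P$, using only the addition formula of Lemme \ref{lem:AddU}, the coprimality statements of Lemme \ref{lem:DivRec}, the expression of $u$ via the Lucas sequence in Lemme \ref{lem:uU}, and the elementary pgcd rules recalled in the introduction. Throughout I write $U=U(P,Q)$; the hypothesis $R\land Q=P\land Q=1$ is exactly what makes Lemme \ref{lem:DivRec} applicable to $u$ (and, with $R$ replaced by $P$, to $U$).

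First I would specialise Lemme \ref{lem:AddU} to $m=n$, obtaining $u_{2n}=U_nu_{n+1}-QU_{n-1}u_n$. Since $u_n$ divides the term $QU_{n-1}u_n$, the assumption $u_n\mid u_{2n}$ is equivalent to $u_n\mid U_nu_{n+1}$. By Lemme \ref{lem:DivRec} one has $u_n\land u_{n+1}=1$, so the factor $u_{n+1}$ can be dropped: the assumption becomes $u_n\mid U_n$.

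Next I would use Lemme \ref{lem:uU} in the form $U_n=u_n-(R-P)U_{n-1}$, which rewrites $u_n\mid U_n$ as $u_n\mid (R-P)U_{n-1}$. To conclude I need $u_n$ and $U_{n-1}$ coprime --- the one point that is not pure bookkeeping. From Lemme \ref{lem:uU} and the rule $q\land(bq+r)=q\land r$, one gets $u_n\land U_{n-1}=\bigl(U_n+(R-P)U_{n-1}\bigr)\land U_{n-1}=U_n\land U_{n-1}$, and $U_n\land U_{n-1}=1$ because $U$ is strongly divisible (Proposition \ref{prop:DivLucas}, since $P\land Q=1$), so that $U_n\land U_{n-1}=|U_{n\land(n-1)}|=|U_1|=1$; equivalently, one may apply Lemme \ref{lem:DivRec} to $U$. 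With this coprimality the factor $U_{n-1}$ can be dropped, giving $u_n\mid R-P$.

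I do not expect a real obstacle: the argument is just the chain $u_n\mid u_{2n}\Leftrightarrow u_n\mid U_nu_{n+1}\Leftrightarrow u_n\mid U_n\Leftrightarrow u_n\mid(R-P)U_{n-1}\Leftrightarrow u_n\mid R-P$, of which only the forward direction is needed, and the sole substantive ingredient is the coprimality of two consecutive terms of a Lucas sequence with coprime parameters --- the step I would take most care to state cleanly.
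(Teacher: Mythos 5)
Your proof is correct and follows essentially the same route as the paper's: specialise Lemme \ref{lem:AddU} to $m=n$, remove $u_{n+1}$ via $u_n\land u_{n+1}=1$ (Lemme \ref{lem:DivRec}), substitute using Lemme \ref{lem:uU}, and remove $U_{n-1}$ via $U_n\land U_{n-1}=1$. The paper merely packages these same steps as a single chain of pgcd equalities, $u_{2n}\land u_n=\dots=u_n\land(R-P)$, rather than as divisibility implications.
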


\begin{proof}D'après les lemmes \ref{lem:AddU}, \ref{lem:DivRec} et \ref{lem:uU} :

$u_{2n}\land u_n=(U_nu_{n+1})\land u_n=U_n\land u_n=U_n\land(R-P)U_{n-1}=U_n\land(R-P)=u_n\land(R-P).$
%ou encore$$U_n\land u_n=(U_n-u_n)\land U_n=((R-P)U_{n-1})\land U_n=(R-P)\land U_n=(R-P)\land u_n$$
\end{proof}

On peut déduire de cette proposition (ou démontrer directement) une condition nécessaire de divisibilité forte un peu plus contraignante que celle du lemme \ref{lem:Ind34} :

\begin{corollaire}\label{cor:Ind1a4}
$$(u_3\land u_4=1\text{ et }u_2\mid u_4)\Rightarrow(P\land Q=1\text{ et }R\mid P).$$
\end{corollaire}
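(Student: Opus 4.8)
The plan is to combine Lemme~\ref{lem:Ind34} with Proposition~\ref{prop:DivR-P} applied at $n=2$. First, from the hypothesis $u_3\land u_4=1$, Lemme~\ref{lem:Ind34} gives immediately $P\land Q=1$ and $R\land Q=1$, which is the first half of the conclusion and also exactly the hypothesis needed to invoke Proposition~\ref{prop:DivR-P}.

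Next, I would observe that $u_4=u_{2\cdot 2}$, so the second hypothesis $u_2\mid u_4$ is precisely the statement $u_n\mid u_{2n}$ for $n=2$. Proposition~\ref{prop:DivR-P} then yields $u_2\mid R-P$. Since $u_2=R$ by the \hyperlink{hyp}{hypothèse}, this reads $R\mid R-P$, hence $R\mid P$, completing the proof.

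As an alternative self-contained argument (the ``démontrer directement'' option), I would instead compute modulo $R$: from $u_4=Pu_3-QR$ one gets $R\mid Pu_3$, and since $u_3=PR-Q\equiv -Q\pmod R$ this gives $R\mid PQ$; combined with $R\land Q=1$ (from Lemme~\ref{lem:Ind34}) and the third basic gcd property quoted in the introduction, this forces $R\mid P$. Either way, there is no real obstacle here: the corollary is a packaging of Proposition~\ref{prop:DivR-P} at the smallest nontrivial index, and the only point worth stating clearly is the identification $u_4=u_{2\cdot 2}$ together with $u_2=R$.
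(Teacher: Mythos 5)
Your proposal is correct and follows exactly the paper's own argument: Lemme~\ref{lem:Ind34} yields $P\land Q=R\land Q=1$, then Proposition~\ref{prop:DivR-P} applied at $n=2$ gives $u_2=R\mid R-P$, hence $R\mid P$. Your alternative direct computation modulo $R$ is also valid and corresponds to the paper's parenthetical remark that one could \og démontrer directement \fg{} this condition.
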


\begin{proof}D'après le lemme \ref{lem:Ind34}, $u_3\land u_4=1$ implique $P\land Q=R\land Q=1$ or sous cette hypothèse, d'après la proposition ci-dessus, $u_2\mid u_4\Rightarrow R\mid P$.
\end{proof}

On déduit aussi de cette proposition le critère suivant, qui est une version plus aboutie des propositions 3.1 et 3.4 de Horák et  Skula et rend inutile leur proposition 3.2 (relative à $u_4\mid u_8$) :

\begin{corollaire}\label{cor:235}Si $R\ne0$ et $u_3\land u_4=1$ et si
$$\quad u_2\mid u_4,\quad u_3\mid u_6\quad\text{et}\quad u_5\mid u_{10}$$
alors, $f:=\frac PR$ est un entier et $f-1$ est divisible par $u_3$ et $u_5$.
\end{corollaire}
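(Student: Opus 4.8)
The plan is to read off both conclusions from results already in hand: that $f$ is an integer comes from Corollary~\ref{cor:Ind1a4}, while the divisibilities $u_3\mid f-1$ and $u_5\mid f-1$ come from Proposition~\ref{prop:DivR-P} applied at $n=3$ and $n=5$, after a short reduction modulo $R$.

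First I would observe that $u_3\land u_4=1$ gives $P\land Q=R\land Q=1$ by Lemma~\ref{lem:Ind34}. Since moreover $u_2\mid u_4$, Corollary~\ref{cor:Ind1a4} yields $R\mid P$; as $R\ne0$, this means $f=P/R\in\Z$, and then $R-P=-R(f-1)$.

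Next, because $P\land Q=R\land Q=1$, Proposition~\ref{prop:DivR-P} is available at every index: the hypotheses $u_3\mid u_6$ and $u_5\mid u_{10}$ give $u_3\mid R-P$ and $u_5\mid R-P$, i.e. $u_3\mid R(f-1)$ and $u_5\mid R(f-1)$. To upgrade these to $u_3\mid f-1$ and $u_5\mid f-1$ it is enough to check $u_3\land R=1$ and $u_5\land R=1$ and then invoke the coprimality rule recalled in the introduction: $u_k\land(R(f-1))=u_k\land(f-1)$, so $u_k\mid R(f-1)$ forces $u_k\mid f-1$ for $k=3,5$. For $u_3$ this is immediate from $u_3=PR-Q\equiv-Q\pmod R$, whence $u_3\land R=Q\land R=1$. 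For $u_5$ I would use $R\mid P$ a second time: modulo $R$ one has $u_4=Pu_3-QR\equiv0$, hence $u_5=Pu_4-Qu_3\equiv-Q\cdot(-Q)=Q^2\pmod R$, so $u_5\land R=Q^2\land R=1$.

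Thus the corollary is essentially a concatenation of the preceding statements; the only place where a genuine (if tiny) computation intervenes is the coprimality $u_5\land R=1$, and even that collapses once one notices that modulo $R$ the recurrence loses its $P$-term because $R\mid P$.
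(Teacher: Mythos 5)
Your proof is correct and follows essentially the same route as the paper: Corollary~\ref{cor:Ind1a4} gives $R\mid P$, Proposition~\ref{prop:DivR-P} applied at $n=3$ and $n=5$ gives $u_3\mid R(f-1)$ and $u_5\mid R(f-1)$, and a coprimality argument then cancels the factor $R$. The only (harmless) variation is in that last step: the paper proves the slightly stronger facts $u_3\land P=Q\land P=1$ and $u_5\land P=(Qu_3)\land P=1$ and uses $R\mid P$, whereas you compute $u_3\equiv-Q$ and $u_5\equiv Q^2\pmod R$ directly; both correctly yield $u_3\land R=u_5\land R=1$.
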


\begin{proof}Sous ces hypothèses, $R\mid P$ et (d'après la proposition \ref{prop:DivR-P}) $P-R=R(f-1)$ est divisible par $u_3$ et $u_5$. On conclut en utilisant qu'ils sont premiers avec $R$ : on a même $u_3\land P=Q\land P=1$ et $u_5\land P=(Qu_3)\land P=1$. (On pourrait d\'emontrer que $f-1$ est divisible par le produit $u_3u_5$ et même, si $u_4\mid u_8$, par $u_3\frac{u_4}Ru_5$, mais ce ne sera pas utile.)
\end{proof}

\section{Suites exceptionnelles}\label{section:Theo1}

Dans la recherche de suites à divisibilité forte, on impose aux paramètres $P$, $Q$, $R$ les conditions $P\land Q=1$ et $R\mid P$ (issues du corollaire \ref{cor:Ind1a4}) et, pour exclure le cas connu des suites de Lucas, on suppose $P\ne R$ mais aussi  $Q\ne R(P-R)$ (cf. remarque \ref{rem:GeomLucas}). Puisque $R$ divise $P$ sans lui être égal, il est non nul, ce qui permet de se placer dans les conditions du corollaire ci-dessus.

Le théorème \ref{theo:Principal} résulte alors immédiatement du suivant.

\begin{theoreme}\label{theo:BigTheo}Si $$P\ne R,\quad Q\ne R(P-R),\quad u_3\land u_4=1,\quad u_2\mid u_4,\quad u_3\mid u_6\quad\text{et}\quad u_5\mid u_{10}$$
alors ou bien $P=0$ et $Q=\pm1$, ou bien $P=-R=\pm1$ et $Q=0$ ou $1$.
\end{theoreme}

La famille des suites $(1,r,\varepsilon,\varepsilon r,1,r,\dots)$ du théorème \ref{theo:Principal} correspond au cas $P=0$, traité dans la remarque \ref{rem:PouQ=0}.b). Elles sont de période $1$, $2$ ou $4$.

Les deux suites de la forme $(1,\varepsilon,-1,\varepsilon,-1,\dots)$ correspondent au cas $P=\pm1$, $Q=0$, traité dans la remarque \ref{rem:PouQ=0}.a). Elles sont, à partir de l'indice $2$, de période $1$ ou $2$.

Les deux suites supplémentaires, de la forme $(1,\varepsilon,-2,\varepsilon,1,-2\varepsilon,1,\varepsilon,\dots)$, correspondent au cas\break$P=\pm1, Q=1$. Elles sont de période $3$ ou $6$.

Toutes sont à divisibilité forte, d'après le lemme \ref{lem:ValAbs+Period}.

\begin{proof}Par hypothèse,
$$k:=R^2-u_3=Q-R(P-R)\ne0$$
et $f:=\frac PR\ne1$. Excluons aussi le cas $f=0$ (c'est-à-dire $P=0$), déjà traité. Traitons le cas $f=-1$ puis montrons que les autres valeurs de $f$ sont impossibles. Pour cela, remarquons que
$$u_5=u_3^2-kP^2$$
et appliquons le corollaire \ref{cor:235}.

Si $f=-1$ (c'est-à-dire $P=-R$) alors $|k|P^2\le u_3^2+|u_5|\le(f-1)^2+|f-1|=6$ et $P^2$ n'est pas égal à $4$ (sinon, $u_3=R^2-k$ serait égal à $4\pm1$ et ne diviserait pas $|f-1|=2$). Par conséquent, $P^2=1$, si bien que $Q=k-2$, $u_3=1-k$ et $u_5=u_3^2-k$. Les seules solutions pour que $u_3$ et $u_5$ divisent $2$ sont $k=2$ ou $3$, c'est-à-dire $Q=0$ ou $1$.

Montrons maintenant par l'absurde que les autres valeurs de $f$ sont impossibles. Supposons donc $|f|\ge2$. Alors, $|f-1|<f^2$ et $(f-1)^2<3f^2$ donc :

$k>-1$ car $f^2>|f-1|\ge u_5\ge-kP^2=-kf^2R^2$ ;

$k\le3$ et $R^2=1$ car $kf^2R^2\le(f-1)^2+|f-1|<4f^2$.\\
Par conséquent, $1\le k<1+f^2$ et $u_5=(1-k)^2-kf^2=(k-1)(k-1-f^2)-f^2\le-f^2<-|f-1|$, d'où la contradiction.
\end{proof}

\section{Preuve par périodicité}\label{section:Theo2}
En utilisant pleinement la proposition \ref{prop:DivR-P} (plutôt que son corollaire \ref{cor:235}, limité à quelques indices), on peut déduire le théorème \ref{theo:Principal} d'une proposition moins forte (en un certain sens) que le théorème ci-dessus, mais dont la preuve est plus naturelle :

\begin{proposition}Si $P\ne R$, $Q\ne R(P-R)$, $u_3\land u_4=1$ et si, à partir d'un certain rang, $u_n\mid u_{2n}$, alors ou bien $P=0$ et $Q=\pm1$, ou bien $P=-R=\pm1$ et $Q=0$ ou $1$.
\end{proposition}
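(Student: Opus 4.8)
The plan is to use Proposition~\ref{prop:DivR-P} in its full strength: under the hypotheses $P\land Q=1$ and $R\land Q=1$ (which follow from $u_3\land u_4=1$ by Lemme~\ref{lem:Ind34}), the divisibility $u_n\mid u_{2n}$ forces $u_n\mid R-P$. If $u_n\mid u_{2n}$ holds for all $n$ beyond some rank $n_0$, then $u_n\mid R-P$ for all such $n$, so the sequence $(|u_n|)$ is bounded (by $|R-P|$, which is nonzero since $P\ne R$). First I would record this boundedness, then exploit it via periodicity.

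The key observation is that a linear recurrent sequence of order $2$ with bounded values is eventually periodic. Indeed, from Lemme~\ref{lem:DivRec} each $u_n$ is coprime to $Q$; since $u_{n+2}=Pu_{n+1}-Qu_n$, knowing $(u_{n+1},u_n)$ with $u_n$ coprime to $Q$ determines $(u_{n+2},u_{n+1})$, and conversely $Qu_n = Pu_{n+1}-u_{n+2}$ with $\gcd(Q,u_{n+1})=1$ lets one recover $u_n$ from $(u_{n+2},u_{n+1})$ — so the shift map on pairs $(u_{n+1},u_n)$ is injective on the (finite) set of pairs of values bounded by $|R-P|$ that are coprime to $Q$. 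An injective self-map of a finite set is a bijection, hence the orbit is purely periodic from the start: there is a period $t\ge1$ with $u_{n+t}=u_n$ for all $n\ge1$. In particular $|u_n|\le|R-P|$ for \emph{all} $n\ge1$, and moreover $u_3\mid R-P$ and $u_5\mid R-P$ because these indices too lie in the (now global) periodic regime — we recover exactly the divisibility conclusions that Corollaire~\ref{cor:235} extracted from the three test divisibilities, but now for free and simultaneously with a global bound on the sequence.

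From here the plan is to rerun the endgame of the proof of Théorème~\ref{theo:BigTheo} essentially verbatim, since that argument used only: $R\mid P$ (equivalently $f:=P/R\in\Z$, which follows from $u_2\mid u_4$, itself a consequence of periodicity as $u_2$ appears infinitely often and $u_2\mid R-P$ gives $R\mid P$); $f\ne1$ and $f\ne0$ (the latter excluded as already treated); the identities $u_5=u_3^2-kP^2$ and $u_3=R^2-k$ with $k:=Q-R(P-R)\ne0$; and the divisibilities $u_3\mid f-1$, $u_5\mid f-1$. Those last two hold because $u_3$ and $u_5$ are coprime to $R$ (indeed $u_3\land P = Q\land P=1$ and $u_5\land P=(Qu_3)\land P=1$, as in Corollaire~\ref{cor:235}) while they divide $P-R=R(f-1)$. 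The case analysis $f=-1$ versus $|f|\ge2$ then yields $P=-R=\pm1$ with $Q\in\{0,1\}$, exactly as before.

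The main obstacle — really the only genuinely new point — is justifying that eventual divisibility $u_n\mid u_{2n}$ propagates back to \emph{all} indices. This is precisely what the bijectivity-of-the-shift argument delivers: boundedness plus invertibility of the recurrence modulo the coprimality with $Q$ forces pure periodicity, so the eventual behaviour is the behaviour. One should be a little careful that $R-P\ne 0$ is what makes the value set finite and that $\gcd(u_n,Q)=1$ (Lemme~\ref{lem:DivRec}) is what makes the backward step well-defined; both are in hand. Everything after that is the already-verified computation from Section~\ref{section:Theo1}.
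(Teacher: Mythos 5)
Votre première étape coïncide avec celle du papier : la proposition~\ref{prop:DivR-P} borne $|u_n|$ par $|R-P|\ne0$ à partir d'un certain rang, d'où la périodicité. Mais la suite diverge. Le papier exploite l'hypothèse $Q\ne R(P-R)$ comme non-annulation du déterminant $\begin{vmatrix}u_1&u_2\\u_2&u_3\end{vmatrix}$ : toute solution complexe de la récurrence est combinaison linéaire de $(u_n)$ et $(u_{n+1})$, donc finalement périodique ; appliqué aux suites géométriques $(\alpha^n)$ et $(\beta^n)$ (racines distinctes par le lemme~\ref{lem:RacineDouble}), cela donne $|Q|=1$ et $|P|<2$, et la conclusion tombe en trois lignes. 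Vous, au contraire, rendez la périodicité \emph{globale} par réversibilité de la récurrence, en déduisez $u_2\mid R-P$, $u_3\mid f-1$, $u_5\mid f-1$, puis rejouez intégralement la discussion $f=-1$ contre $|f|\ge2$ du théorème~\ref{theo:BigTheo} (c'est là que vous utilisez $Q\ne R(P-R)$, sous la forme $k\ne0$). Votre variante est correcte et a le mérite de recycler le calcul déjà fait, mais elle est plus lourde que l'argument du papier, qui évite précisément de refaire ce calcul.

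Deux points à rectifier. D'abord, ce qui rend le pas rétrograde $u_n=(Pu_{n+1}-u_{n+2})/Q$ licite est $Q\ne0$, et non $u_n\land Q=1$ comme vous l'écrivez : pour $Q=0$ l'application de décalage n'est pas injective et votre argument de bijection s'effondre. Il faut donc, comme le fait le papier (\og Excluons les deux cas triviaux $P=0$ ou $Q=0$\fg), évacuer d'abord $Q=0$ (où $P\land Q=R\land Q=1$ et $R\ne P$ forcent immédiatement $P=-R=\pm1$) et $P=0$ (où $P\land Q=1$ force $Q=\pm1$) ; ces deux cas figurent dans la conclusion et ne peuvent pas être simplement \og déjà traités\fg{} hors de la preuve. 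Ensuite, votre formulation \og bijection d'un ensemble fini donc orbite purement périodique dès $n=1$\fg{} mélange deux étapes : la bijectivité donne la périodicité pour $n\ge n_0$ seulement, et c'est la récurrence descendante $u_{n-1+t}=u_{n-1}$ (division par $Q\ne0$) qui la propage jusqu'à $n=1$. Une fois ces précisions faites, votre démonstration est complète.
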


\begin{proof}Excluons les deux cas triviaux $P=0$ ou $Q=0$ (cf. remarque \ref{rem:PouQ=0}).\\
D'après la proposition \ref{prop:DivR-P} et l'hypothèse $R-P\ne0$, le couple $(u_n,u_{n+1})$ ne prend qu'un nombre fini de valeurs, donc la suite $u$ est périodique à partir d'un certain rang.\\
D'après l'hypothèse $\begin{vmatrix}u_1&u_2\\u_2&u_3\end{vmatrix}=PR-Q-R^2\ne0$, toute suite $(v_n)_{n\ge1}$ de nombres complexes vérifiant $v_{n+2}=Pv_{n+1}-Q v_n$ est combinaison linéaire des deux suites $(u_n)$ et $(u_{n+1})$, donc est également périodique à partir d'un certain rang.\\
%$$\forall n\in\N\quad U_n=\frac{Ru_{n+1}-u_{n+2}}{Q-R(P-R)}$$
En particulier, les deux racines $\alpha$ et $\beta$ de l'équation caractéristique $X^2-PX+Q=0$ sont distinctes (d'après le lemme~\ref{lem:RacineDouble}) et les suites géométriques associées, $(\alpha^n)$ et $(\beta^n)$, sont périodiques. Par conséquent, $|Q|=|\alpha\beta|=1$ et $|P|=|\alpha+\beta|<2$.\\
Chacun des deux entiers $P$ et $Q$ est donc égal à $1$ ou $-1$ et l'on a $R=-P$ (car $R\mid P=\pm1$ et $R\ne P$) et $Q\ne-1$ (car $u_3=PR-Q=-1-Q$ doit être non nul, pour diviser $|R-P|=2$).
\end{proof}

\section{Remarques}\label{section:Remarques}

\begin{remarque}{\rm Pour toutes les valeurs de $P,Q$, la suite de Lucas $U=U(P,Q)$ est à {\sl divisibilité faible}, c'est-à-dire que
$$\forall i,j\in\N^*\quad i\mid j\Rightarrow U_i\mid U_j.$$
En effet, la suite $U$ est le cas particulier $R-P=0$ de la suite $u$, or cette dernière vérifie la propriété suivante (dont la proposition \ref{prop:DivR-P} offrait une réciproque, sous l'hypothèse $R\land Q=P\land Q=1$) :
$$\text{si}\quad u_n\mid R-P\quad\text{alors}\quad\forall k\in\N^*\quad u_n\mid u_{kn}.$$
Cette propriété se démontre par récurrence sur $k$, en remarquant que -- d'après le lemme \ref{lem:AddU} -- si $u_n\mid R-P$ et si de plus $u_n\mid u_m$ (ou, ce qui d'après le lemme~\ref{lem:uU}, est équivalent : si de plus $u_n\mid U_m$) alors $u_n\mid u_{m+n}$. Accessoirement, on en déduit que si $R\mid P$ alors $\forall k\in\N^*$ $R\mid u_{2k}$.
}
% remarque : si la suite est injective, la divisibilité forte implique $n\mid m$ {\bf si et} seulement si $u_n\mid u_m$
\end{remarque}

\begin{remarque}{\rm La divisibilité faible de $U(P,Q)$, et sa divisibilité forte si $P\land Q=1$, sont classiques. Notre preuve de la proposition \ref{prop:DivLucas} (divisibilité forte) est une légère variante de celle de \cite{Bala}, qui remarque qu'elle est encore valide lorsqu'on remplace l'anneau des entiers par n'importe quel anneau -- commutatif et intègre -- \href{https://fr.wikipedia.org/wiki/Anneau_%C3%A0_PGCD}{à PGCD}). On peut faire la même remarque pour la divisibilité faible. Ces deux propriétés sont donc vérifiées par exemple dans n'importe quel anneau factoriel. Curieusement, le {\sl Fibonacci Quarterly} a accepté de publier un article (\cite{Norfleet}) qui consistait à les redémontrer dans le cas particulier de l'anneau $\Z[X]$.}
\end{remarque}

\begin{remarque}{\rm La proposition \ref{prop:DivR-P} se généralise, en supprimant l'hypothèse $R\land Q=P\land Q=1$ : si une suite $u$ comme dans l'\hyperlink{hyp}{hypothèse} est à divisibilité faible alors $\forall n\in\N^*\quad u_n\mid Q^{n-1}(R-P)$. Plus généralement, si une suite d'entiers $(u_n)_{n\ge1}$ vérifiant une relation de récurrence linéaire {\sl d'ordre quelconque\/} $k$
$$u_{n+k}=a_1u_{n+k-1}+\dots+a_ku_n$$
est à divisibilité faible, alors $\forall n\in\N^*\quad u_n\mid(a_k)^{n-1}[u_k-( a_1u_{k-1}+\dots+a_{k-2}u_2+a_{k-1}u_1)]$.

En effet, \cite{Hall} a démontré que si une suite d'entiers $(v_n)_{n\ge0}$ vérifiant cette même récurrence d'ordre $k$ est à divisibilité faible, alors $v_0$ est divisible par tout entier premier avec $a_k$ qui divise l'un des $v_n$, ce que \cite{Kimberling} a précisé en :
$$\forall n\in\N^*\quad v_n\mid(a_k)^nv_0.$$
Or en excluant le cas immédiat $a_k=0$,  l'étude des suites $u$ de l'énoncé ci-dessus (de premier terme $u_1$) se ramène à celle des suites $v$ de Hall (de premier terme $v_0$) en posant
$$v_0=u_k-( a_1u_{k-1}+\dots+a_{k-2}u_2+a_{k-1}u_1)\quad\text{et}\quad\forall n\in\N^*\quad v_n=a_ku_n.$$
}\end{remarque}

\begin{remarque}{\rm L'argument essentiel de la section \ref{section:Theo2} (la périodicité, déduite ici de la proposition \ref{prop:DivR-P}) s'étend également aux récurrences d'ordre quelconque. En effet, dès son article de 1979 (antérieur à celui de Horák et Skula), Kimberling avait déjà démontré qu'une suite d'entiers $(v_n)_{n\ge0}$ à divisibilité forte et vérifiant une relation de récurrence linéaire d'ordre quelconque
$$v_{n+k}=a_1v_{n+k-1}+\dots+a_kv_n$$
(avec $a_k\ne0$) est périodique dès que son premier terme $v_0$ est non nul. Autrement dit (par le même procédé que dans la remarque précédente) : une suite $(u_n)_{n\ge1}$ à divisibilité forte et vérifiant cette même récurrence est périodique dès que
$$u_k\ne a_1u_{k-1}+\dots+a_{k-1}u_1,$$
c'est-à-dire, dans notre cas ($k=2$, $a_1=P$, $0\ne a_k=a_2=-Q$, $u_1=1$, $u_2=R$), dès que $R\ne P$.

Même pour $k=2$, la situation est bien plus complexe lorsqu'on remplace $\Z$ par l'anneau des entiers d'un corps de nombres (\cite{Schi}) car $u$ n'est alors périodique qu'à produit près par une suite d'éléments inversibles de l'anneau.
}
\end{remarque}

\noindent\author{Anne Bauval,} bauval@math.univ-toulouse.fr\\
\address{\small Institut de Math\'ematiques de Toulouse\\
\' Equipe \' Emile Picard, UMR 5580, Universit\'e Toulouse III\\
118 Route de Narbonne, 31400 Toulouse - France\\
}

\end{document}